\newcommand{\mtn}{M_{thin}}
\newcommand{\vol}{\textnormal{vol}}
\newcommand{\wm}{\widetilde{M}}
\newtheorem{theorem}{Theorem}[section]
\newtheorem{corollary}{Corollary}[section]
\newtheorem{lemma}{Lemma}[section]
\newtheorem{proposition}{Proposition}[section]
\newtheorem{remark}{Remark}[section]
\title{On Eigenvalues of Geometrically Finite Hyperbolic Manifolds of Infinite Volume}
\author{Xiaolong Hans Han}
\date{}
\begin{document}

\maketitle

\begin{abstract}
	Let $M$ be an oriented geometrically finite hyperbolic manifold of infinite volume with dimension $n \geq 3$. For all $k \geq 0$, we provide a lower bound on the $k$-th eigenvalue of the Laplace–Beltrami operator of $M$ by the $k$-th eigenvalue of some neighborhood of the thick part of the convex core, up to a constant. As an application, we recover a theorem similar to the one of M. Burger and R. Canary which bounds the bottom $\lambda_0$ of the spectrum from below by $\frac{c}{\vol(C_1(M))^2}$, where $C_1(M)$ is the $1$-neighborhood of the convex core and $c$ is a constant.  
\end{abstract}

\section{Introduction} 
As analytic data of a manifold, the spectrum of the Laplace-Beltrami operator contains rich information about the geometry and topology of a manifold. For example, by results of J. Cheeger \cite{cjLowerBoundSmallest} and P. Buser \cite{bpNoteIsoperimetric}, on a compact manifold with a lower bound on the Ricci curvature, the first nonzero eigenvalue can be bounded from below and above by quadratic expressions on the Cheeger constant. The rigidity of hyperbolic geometry makes the connection even more intriguing: according to Canary\cite{canary1992}, for infinite-volume, topologically tame hyperbolic 3-manifolds, the bottom of the $L^2$-spectrum of  $-\Delta$, denoted by $\lambda_0$, is zero if and only if the manifold is \textit{not} geometrically finite. If $\lambda_0$ of a geometrically finite, infinite-volume hyperbolic $3$-manifold $M$ is $1$, then Canary and E. Taylor \cite{ctKleinianGroupsLimit} classify its topology: $M$ is either the topological interior of a handlebody or an $\mathbb{R}$-bundle over a closed surface. \par

On the other hand, geometric invariants of a manifold, such as volume, pose restrictions on its spectrum. For closed hyperbolic manifolds $M$, R. Schoen \cite{schoen1982} shows that the first nonzero eigenvalue is bounded from below by $\frac{c}{\vol(M)^2}$, where $c$ is a universal constant. N. White\cite{doi:10.1112/jlms/jds082} show that for $\epsilon$-thick closed hyperbolic $3$-manifolds with the rank of their fundamental groups $\leq m$, the $k$-th eigenvalue $\lambda_k$ is approximately $\frac{c}{\vol(M)^2}$, where $c$ is a constant depending on $\epsilon$, $m$ and $k$. For noncompact negatively curved manifolds of finite volume, B. Randol \cite{dodziuk1986lower} and J. Dodziuk\cite{dod1987} show that the bottom of the spectrum has a similar lower bound by $\frac{c}{\vol(M)^2}$. In a series of papers, P. Lax and R. Phillips \cite{LAX1982280,lpI,lpII,lpIII}, and P. Hislop [\citenum{Hislop1994}] have studied the spectrum of Laplacian on geometrically finite manifolds and establish the subtle issue of the existence of discrete eigenvalues of noncompact geometrically finite hyperbolic $n$-manifolds: there can be only finitely many eigenvalues of finite multiplicity in $[0,(n - 1)^2/4)$. There are finite-volume examples with infinitely many eigenvalues in $[(n - 1)^2/4, \infty)$, but they seem to occur quite rarely. On the contrary, geometrically finite manifolds of infinite volume have no eigenvalues in $[((n - 1)/2)^2, \infty)$. J. Lott \cite{lott1997l2} and R. Canary \cite{canary1992} have obtained interesting results on eigenvalues of geometrically infinite manifolds. Hamenstädt \cite{hamenstadt2018small} relates the eigenvalues of an oriented noncompact finite-volume pinched negatively curved $n$-manifold to the eigenvalues of the compact, thick part of the manifold. In particular, for every oriented finite-volume Riemannian manifold $M$ of dimension $n \geq 3$ and sectional curvature $ \kappa \in [-b^2, -1]$ and for all $k \geq 0$, Hamenstädt proved that 

\begin{center}
	$\Large \lambda_k(M) \geq $ min \{$\frac{1}{3} \lambda_k(\widehat{M}_{thick}) ,\frac{(n-2)^2}{12}$\},
\end{center} 
where $\widehat{M}_{thick}$ denotes the thick part of $M$ under a suitable choice of Margulis constant, and $\lambda_k(M)$ denotes the $k$-th discrete eigenvalue of the Laplace-Beltrami operator of $M$ counting with multiplicities. \par
Let $M$ be a geometrically finite, hyperbolic $n$-manifold of infinite volume. Denote by $\widetilde{M}$ the neighborhood of the thick part of the convex core of $M$, constructed in Section 2 \ref{construCore} Based on the techniques of \cite{hamenstadt2018small}, we bound the eigenvalues of the Laplace-Beltrami operator of $M$ by those of $\widetilde{M}$: 

\begin{theorem}\label{mainth}
	Let $M$ be an oriented geometrically finite hyperbolic manifold of infinite volume with dimension $n \geq 3$, whose cusps are all of maximal rank. Denote by $\lambda_k(M)$ the $k$-th discrete eigenvalue of the Laplace-Beltrami operator of $M$. Then for all $k \geq 0$, we have 
	\begin{equation}
		\lambda_k(M) \geq  \min \{\frac{1}{3} \lambda_k(\widetilde{M}) ,\frac{(n-2)^2}{12}\},
	\end{equation} 
	where the boundary condition on $\widetilde{M}$ is Neumann.  
\end{theorem}

\begin{remark}
	The number $\frac{(n-2)^2}{12}$ comes with the following intuition. The constant $\frac{(n-2)^2}{4}$ is closely related to the geometry of the Margulis tubes and cusps: it is the infimum of Rayleigh quotients for tubes and finite-volume cusps, which follow from the metric. For the class of functions we are interested in, the $L^2$-norm of $f$ on $\widetilde{M}$ is at least $1/3$ of the $L^2$-norm of $f$ on $M$. Such a lower bound can be summarized as follows: for functions with small enough Rayleigh quotients, the mass is at least $\frac{1}{3}$-concentrated on $\widetilde{M}$. 
\end{remark}
A natural corollary, using the uniform geometry of the core of a geometrically finite manifold and comparison with the eigenvalues of a uniform graph, is the following.
\begin{corollary}
	There exists a constant $C(n)$ that only depends on the dimension such that 
	\begin{equation}
		\lambda_0(M) \geq \frac{C(n)}{\vol(\widetilde{M})^2}. 
	\end{equation}
\end{corollary}
\begin{remark}
	This generalizes [\citenum{hamenstadt2018small}, Corollary 2.6]. Compare with the main theorem of \cite{burger1994lower}, which has a similar lower bound on $\lambda_0(M)$ using the volume of the $1$-neighborhood of the convex core replacing $\vol(\widetilde{M})$. Our construction of $\widetilde{M}$ is more complicated than taking a neighborhood of the convex core, but the proof is conceptually more accessible and can be adapted to more general settings such as pinched negative curvatures. 
\end{remark}

\section{Preliminaries}\label{defBackground}
Let $M$ be an oriented geometrically finite hyperbolic manifold of infinite volume with  
dimension $n\geq3$, whose cusps are of maximal rank. This section introduces some important definitions and theorems. We also describe the construction of the core $\widetilde{M}$, building on properties of geometric finiteness. Readers may refer to \cite{rjFoundationHypMan} for general background on (geometrically finite) hyperbolic manifolds. \par

\begin{enumerate}[label=\Roman*.]
	\item Rayleigh Quotient: \par
	For a smooth square-integrable nonzero function $f$ on $M$, the Rayleigh quotient of $f$, denoted by $\mathcal{R}(f)$, is \par
	\begin{center}
		$\mathcal{R}(f)=\displaystyle\frac{\int_M \| \nabla f \|^2}{\int_M f^2}$.
	\end{center}
	
	\item Min-max Theorem for Self-Adjoint Operators\par
	The Laplace-Beltrami operator of a noncompact geometrically finite manifold is self-adjoint (see \cite{Hislop1994}). Recall that the discrete eigenvalues are less than $\frac{(n-1)^2}{4}$, which is the bottom of the essential spectrum (see [\citenum{Hislop1994}, Theorem 2.12]). The following Min-max theorem \cite{teschlmathematical} is used later: \par 
	\begin{theorem}
		Let $A$ be a self-adjoint operator, and let ${\lambda_{0}\leq \lambda_{1}\leq \lambda_{2}\leq \lambda_{3}\leq \cdots } $  be the eigenvalues of $A$ below the essential spectrum $\sigma _{ess}(A)$. Then \par
		\begin{center}
			${\lambda_{n}=\min \limits_{\psi _{1},\ldots ,\psi _{n}}\max\{\langle \psi ,A\psi \rangle :\psi \in {span} (\psi _{1},\ldots ,\psi _{n}),\,\|\psi \|=1\}}$,
		\end{center}
		where $\psi _i$'s lie in the domain of $A$ and are linearly independent.
		If there are only $N$ eigenvalues counting with multiplicities, then we let ${\displaystyle \lambda_{n}:=\inf \sigma _{ess}(A)}$ (the bottom of the essential spectrum) for $n > N$, and the above statement holds after replacing min-max with inf-sup. 
	\end{theorem}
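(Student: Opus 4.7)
The plan is to establish the min–max identity via the two inequalities $\min\max \le \lambda_n$ and $\lambda_n \le \min\max$. This is the classical Courant–Fischer–Weyl argument, adapted from matrices to unbounded self-adjoint operators by replacing the eigenbasis with the projection-valued spectral measure $\{E_\lambda\}$ of $A$. Throughout I treat the generic case of $n$ linearly independent test vectors in $\mathrm{Dom}(A)$; degenerate tuples are absorbed by passing to their span.

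For the upper bound $\min\max \le \lambda_n$, I would exhibit a specific choice realizing $\lambda_n$. Take $\psi_i = \phi_i$, orthonormal eigenvectors for $\lambda_1,\dots,\lambda_n$, which exist because these eigenvalues lie strictly below $\inf\sigma_{ess}(A)$. Any unit vector $\psi = \sum_{i=1}^{n} c_i \phi_i$ in $\mathrm{span}(\phi_1,\dots,\phi_n)$ satisfies $\langle \psi, A\psi\rangle = \sum \lambda_i |c_i|^2 \le \lambda_n$, with equality at $\psi = \phi_n$. Hence the maximum over this span equals $\lambda_n$.

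For the reverse inequality $\lambda_n \le \min\max$, fix arbitrary $\psi_1,\dots,\psi_n$ and set $E = \mathrm{span}(\psi_1,\dots,\psi_n)$, which we may take to be $n$-dimensional. Let $Q$ denote orthogonal projection onto $\mathrm{span}(\phi_1,\dots,\phi_{n-1})$. Since $\dim E = n > n-1$, a dimension count forces a nonzero $\psi \in E$ with $Q\psi = 0$; normalize to $\|\psi\| = 1$. Via the spectral resolution of $A$, decompose $\psi = \sum_{i\ge n} c_i \phi_i + \psi_{ess}$, where $\psi_{ess}$ is the component whose spectral measure is supported in $\sigma_{ess}(A)$. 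Since $\lambda_i \ge \lambda_n$ for $i \ge n$, and $\inf\sigma_{ess}(A) > \lambda_n$, the functional calculus applied piecewise yields $\langle \psi, A\psi\rangle \ge \lambda_n \|\psi\|^2 = \lambda_n$, so the maximum of $\langle \cdot, A\,\cdot\rangle$ on the unit sphere of $E$ is at least $\lambda_n$.

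The main obstacle is handling the continuous part of the spectrum rigorously: vectors in $E$ may have nontrivial spectral support on $\sigma_{ess}(A)$, so the contribution $\langle \psi_{ess}, A\psi_{ess}\rangle$ must be controlled by $\int \lambda \, d\langle E_\lambda \psi_{ess}, \psi_{ess}\rangle \ge \inf\sigma_{ess}(A)\,\|\psi_{ess}\|^2$ rather than by summing an eigenseries, and one must verify that $\psi \in \mathrm{Dom}(A)$ (which is automatic from $E \subset \mathrm{Dom}(A)$). For the truncated case of only $N$ eigenvalues, the upper-bound argument degenerates: for $n > N$ one cannot pick $n$ eigenvectors, so one instead takes trial vectors with spectral mass concentrated in $[\inf\sigma_{ess}(A),\inf\sigma_{ess}(A)+\varepsilon]$, produced by applying $\mathbf{1}_{[\inf\sigma_{ess}(A),\inf\sigma_{ess}(A)+\varepsilon]}(A)$ to an arbitrary nonzero vector. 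The Rayleigh quotient of such a vector approaches but need not attain $\inf\sigma_{ess}(A)$, which is precisely why one must replace min–max by inf–sup in that regime.
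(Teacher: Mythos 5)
The paper does not actually prove this statement: it appears as background (Theorem 2.1) quoted with a citation to \cite{teschlmathematical}, so there is no in-paper argument to compare against, and your proposal has to be judged as a reconstruction of the textbook proof. As such it follows exactly the standard Courant--Fischer route via the spectral theorem: an eigenvector trial subspace for the inequality $\min\max\le\lambda_n$, and for the reverse inequality a dimension count against $\mathrm{span}(\phi_1,\dots,\phi_{n-1})$ combined with the spectral resolution, using that $\psi\perp\phi_1,\dots,\phi_{n-1}$ forces the spectral measure of $\psi$ to be supported on the remaining eigenvalues and on $\sigma_{ess}(A)$, whence $\langle\psi,A\psi\rangle=\int\lambda\,d\|E_\lambda\psi\|^2\ge\lambda_n\|\psi\|^2$. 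That skeleton is correct and is essentially the argument in the cited source.

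Two soft spots should be tightened. First, degenerate tuples cannot be ``absorbed by passing to their span'': if $\psi_1=\cdots=\psi_n=\phi_1$, the inner maximum equals $\lambda_1<\lambda_n$, which would falsify the identity, so the minimum must be restricted to tuples whose span is genuinely $n$-dimensional (this imprecision is inherited from the statement as transcribed, but your fix must exclude such tuples, not absorb them). Second, in the truncated case $n>N$ your construction is incomplete: you need an $n$-dimensional trial subspace, not a single vector, and $\mathbf{1}_{[\inf\sigma_{ess}(A),\,\inf\sigma_{ess}(A)+\varepsilon]}(A)$ applied to an \emph{arbitrary} nonzero vector may be zero. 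The correct step is to note that this spectral projection has infinite-dimensional range (by Weyl's criterion, since $\inf\sigma_{ess}(A)\in\sigma_{ess}(A)$), pick $n$ orthonormal vectors in that range --- they lie in $\mathrm{Dom}(A)$ automatically because their spectral support is bounded --- and conclude that the sup over their span is at most $\inf\sigma_{ess}(A)+\varepsilon$. You should also state explicitly that the matching bound $\sup\ge\inf\sigma_{ess}(A)$ on \emph{every} $n$-dimensional trial subspace with $n>N$ follows from the same dimension count, now against the at most $N$ eigenvectors. With these repairs the argument is complete and agrees with the standard proof.
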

	
	\item Thick-thin Decomposition for Geometrically Finite Manifolds of Infinite Volume and the Core $\widetilde{M}$:\par
	For completeness, the descriptions of cusps and Margulis tubes from \cite{hamenstadt2004small, hamenstadt2018small} are included and then modified, building on properties of geometric finiteness.
	\begin{enumerate}
		\item Convex core, a neighborhood of an end of infinite-volume and geometrically finite manifolds: \par
		The convex core is the smallest closed convex submanifold $C(M)$ such that the inclusion map $C(M)\hookrightarrow M$ is a homotopy equivalence. A hyperbolic manifold $M$ is called geometrically finite if the volume of any $r$-neighborhood of the convex core $N(C(M),r)$ is finite ([\citenum{rjFoundationHypMan}, Theorem 12.7.2]). The complement of the convex core, $M-C(M)$ is an open subset of $M$ with finitely many components. Each component is a neighborhood of an end of infinite volume. The boundary of the $r$-neighborhood of the convex core $N(C(M),r)$ is compact. Thus we can perturb the boundary slightly so that it is smooth. Denote such an $r$-neighborhood with smooth perturbed boundary by $\widetilde{N}(C(M),r)$. Each component of the complement of $\widetilde{N}(C(M),r)$ can be parameterized by $\mathbb{R}^+ \times \Omega$, where $\Omega$ is hypersurface. When the dimension of $M$ is $3$, the hypersurfaces consist of surfaces of genera $\geq 2$.  
		
		\item Cusps: \label{cusp} \par
		A (truncated) cusp $C$ is the quotient of a horoball under the action of a parabolic subgroup $\Gamma$. A cusp of maximal rank corresponds to a parabolic subgroup of rank $n-1$, which also corresponds to an end of $M$ of finite volume. The metric structure on a cusp $C$ is as follows: The subgroup $\Gamma$ stabilizes a flat horosphere $H$ in the universal cover $X$ of $M$. The cusp $C$ is diffeomorphic to $H/ \Gamma \times [0, \infty)$ with the diffeomorphism mapping each ray ${z} \times [0,\infty)$ to a geodesic in $C$. Denote by $dx^2$ the induced metric on $H/ \Gamma$ of the Riemannian metric on $X$; then the metric $g$ on $C$ can be written as $g = e^{-(n-1)t}dx^2 + dt^2$. Note that the metric $dx^2$ is homothetic to a Euclidean metric ($dx^2=c \cdot ds^2_{Eucli}$). A geometrically finite manifold has finitely many cusps [\citenum{rjFoundationHypMan}, Theorem 12.7.2]. By making each cusp smaller if necessary, we assume it is embedded in $M$ and two distinct cusps are disjoint. 
		
		\item Margulis tubes and the thick-thin decomposition: \label{margulisTube} \par
		The Margulis constant $\epsilon=\epsilon(n)>0$ of dimension $n$ is the largest constant such that the thin part $M_{thin} \coloneqq \{x\in M | inj(x)< \epsilon/2 \}$ consists of cusps and embedded tubular neighborhoods of closed geodesics of length $\ell < \epsilon$. Such a tubular neighborhood $T$ is called a Margulis tube, and its closed geodesic $\gamma$ is called the core geodesic. Topologically the tube $T$ is $S^1 \times D^{n-1}$, where $S^1$ corresponds to $\gamma$ and $D^{n-1}$ is an $(n-1)$-dimensional open disc. This disc parameterizes the normal bundle of $\gamma$. We use Fermi coordinates adapted to $\gamma$ for the metric and various computations. We start by fixing a parameterization of $\gamma$ by arc length on the interval $[0, \ell)$, whose coordinate is denoted by $s$. We then use polar coordinates on $D\coloneqq D^{n-1}$. Let $\rho \geq 0$ be the radial distance from $\gamma$, which is also the radial coordinate on $D$. A point $\gamma(s)$ on the geodesic corresponds to $\rho=0$ and the center of $s \times D$. Let $\sigma$ be the standard angular coordinates on $s\times D$ obtained by parallel transport of the angular coordinate on $\gamma(0)\times D$. The unit normal bundle on $D$ is naturally isomorphic to $S^{n-2}$. Via the normal exponential map, these functions define coordinates $(\sigma, s, \rho)$ on the complement of the core geodesic in the tube, $T -{\gamma}$, whose domain is $S^{n-2} \times [0, \ell) \times(0,\infty)$. Points on the geodesic $\{\rho=0\}$ cannot be assigned an angular coordinate. In these coordinates, the maps $\rho \rightarrow (\sigma, s, \rho)$ are unit speed geodesics with starting points on $\gamma$ and initial velocity perpendicular to $\gamma '(s)$. Denote $N(\gamma)$ the normal bundle of $\gamma$. There exists a continuous function $R : N(\gamma) \rightarrow (0,\infty), (\sigma, s) \rightarrow R(\sigma, s)$ such that in these coordinates, we have $T = \{\rho \leq R_T\}$. The maximal constant $R_T$ such that $T$ is embedded is called the radius of tube $T$. The metric on $T - {\gamma}$ is of the form $h(\rho) + d\rho^2$ where $h(\rho)$ is a family of metrics on the hypersurfaces $\rho = const$. \par 
		A geometrically finite manifold has finitely many tubes [\citenum{rjFoundationHypMan}, Theorem 12.7.8]. Thus the thin part $\mtn$ consists of finitely many components.   
		The thick part $M_{thick}$ is defined as $\{x\in M | inj(x) \geq \epsilon/2 \}$. The decomposition $M=M_{thin}\cup M_{thick}$ is called the thick-thin decomposition. By [\citenum{buser1993tubes}, Lemma 2.4], if a tube has radius $\leq 1$, then the length of its core geodesic is longer than a constant depending only on the dimension. Thus up to picking a slightly smaller Margulis constant, which depends only on the dimension, we can slightly adjust the thick-thin decomposition and replace $M_{thick}$ by its union with all Margulis tubes $\{ T: R_T \leq  1 \}$. Henceforward we assume the radius of each Margulis tube is uniformly bounded below by $1$. \par  
		
	\end{enumerate}
	
	\item Shell Estimates:\par
	[\citenum{dodziuk1986lower}, P134] first defined shells for finite-volume hyperbolic manifolds. We adapt their definitions for general geometrically finite hyperbolic manifolds. A shell for a tube $T$ is the subset of $T$ for which $R_T-1 \leq \rho \leq R_T$. A shell for a cusp $T=H/ \Gamma \times [0, \infty)$ is a subset described by $0 \leq t \leq 1$. A shell for an end is a component of $\widetilde{N}(C(M),r+1)-\widetilde{N}(C(M),r)$. Each shell is thus compact. There are only finitely many shells by properties of geometric finiteness. Shell estimates refer to a set of prescribed conditions on functions on shells so that useful conclusions can be drawn, such as inequalities involving functions or their gradient. Shell estimates appeared in the literature of discrete spectrum of negatively curved manifolds for the first time in \cite{dodziuk1986lower} as Lemma 2, which we record here for convenience. It is a prototype of an argument we will use several times later.  \par
	\begin{lemma}
		For each $n$, there exists a constant $\delta > 0$ such that if $T$ is a thin component of $M$ with shell $S$, and if $f$ is a function defined on $T$ and satisfying:
		
		\begin{enumerate}
			\item $\int_T |f|^2 = c> 0$,
			\item $\int_S \|\nabla f\|^2 < \delta c$,
			\item $\int_S |f|^2 < \delta c$,
		\end{enumerate} \par
		then $\int_T \|\nabla f\|^2 > (c/2)((n - 1)/2)^2$.
	\end{lemma}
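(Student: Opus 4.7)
The strategy is to reduce the claim to the classical fact that $((n-1)/2)^2$ is the bottom of the $L^2$-spectrum of the Laplacian on hyperbolic $n$-space, and to use the shell hypotheses to absorb the defect coming from non-Dirichlet boundary values. I would split by the type of thin component: either $T$ is a maximal rank cusp, in which case $T \cong H/\Gamma \times [0,\infty)$ with volume form $\omega = e^{-(n-1)t}\,dx\wedge dt$, or $T$ is a Margulis tube, in which case I set $t = R(\sigma,s) - \rho$ in the Fermi coordinates so that the shell $S$ again corresponds to $t \in [0,1]$. In both cases $\|\nabla f\|^2 \geq (\partial_t f)^2$ pointwise, so it suffices to bound $\int_T (\partial_t f)^2\,\omega$ from below.

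The core estimate is a one-dimensional integration by parts in $t$, carried out fiber-by-fiber and integrated over the cross-section. In the cusp, the substitution $v(t) = e^{-(n-1)t/2} f(t)$ followed by expanding $(\partial_t f)^2 e^{-(n-1)t}$ in $v$ and $v'$ yields, after discarding a non-negative term,
\[\int_T (\partial_t f)^2\,\omega \;\geq\; \left(\tfrac{n-1}{2}\right)^2 \int_T f^2\,\omega \;-\; \tfrac{n-1}{2}\int_{\partial T} f^2\,d\sigma,\]
where $d\sigma$ is the induced volume form on $\partial T$. The first term on the right equals $((n-1)/2)^2 c$ by hypothesis (i).

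The third step is to control the boundary term by the shell data. A one-dimensional trace inequality, $f(\cdot,0)^2 \leq 2\int_0^1 (f^2 + (\partial_t f)^2)\,dt$, applied fiber-by-fiber, together with the fact that the weight $e^{-(n-1)t}$ on the shell $t\in[0,1]$ is bounded above and below by constants depending only on $n$, gives
\[\int_{\partial T} f^2\,d\sigma \;\leq\; C_n\!\left(\int_S f^2\,\omega + \int_S \|\nabla f\|^2\,\omega\right) \;<\; 2C_n\delta c\]
by hypotheses (ii) and (iii). Combining everything,
\[\int_T \|\nabla f\|^2\,\omega \;\geq\; \left(\tfrac{n-1}{2}\right)^2 c \;-\; (n-1) C_n \delta c,\]
which strictly exceeds $(c/2)((n-1)/2)^2$ as soon as $\delta$ is chosen smaller than a constant depending only on $n$.

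The main obstacle is the Margulis tube case: unlike the cusp, the radial coordinate lives on a bounded interval $[0,R]$ and the metric weight is $\cosh(\rho)\sinh^{n-2}(\rho)$ rather than a clean exponential, and there is a second boundary term from the core geodesic $\rho=0$. I would handle the first issue by noting that for $\rho$ bounded away from $0$ the weight is comparable to $e^{(n-1)\rho}$ up to a dimensional constant, which is enough to run the same substitution; and the second by invoking the standing assumption made earlier when constructing $\widetilde{M}$, that every tube's radial extent is at least $1$, so the core-side contribution sits in the interior and is controlled by the classical hyperbolic lower bound on the universal cover. The resulting inequality has the same form, with constants absorbed into the final choice of $\delta$.
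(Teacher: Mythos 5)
First, note that the paper does not prove this lemma at all: it is quoted verbatim as Lemma~2 of \cite{dodziuk1986lower}, whose proof is the cutoff-function argument (multiply $f$ by a cutoff $\phi$ equal to $1$ on $T\setminus S$ and $0$ on $\partial T$, apply the compact-support inequality of their Lemma~1, and absorb the error terms $\int_S f^2|\nabla\phi|^2$ and $\int_S\|\nabla f\|^2$ using hypotheses (ii) and (iii); the factor $1/2$ in the conclusion is exactly the loss from this absorption). Your route for the cusp case --- integrate by parts in $t$ after the substitution $v=e^{-(n-1)t/2}f$, keep the boundary term $\tfrac{n-1}{2}\int_{\partial T}f^2$, and control it by a one-dimensional trace inequality over the shell --- is correct and complete, and is a genuinely different (arguably cleaner) argument that in fact yields the stronger bound $(1-O_n(\delta))\bigl(\tfrac{n-1}{2}\bigr)^2 c$.

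The tube case, however, has a real gap, and it is the case where the constant $((n-1)/2)^2$ is least obvious. Your justification ``the weight is comparable to $e^{(n-1)\rho}$ up to a dimensional constant, which is enough to run the same substitution'' is not valid: the constant produced by the substitution/integration-by-parts argument is governed by the logarithmic derivative $w'/w$ of the weight $w(\rho)=\cosh\rho\,\sinh^{n-2}\rho$ (and, in the second-order version, by $\tfrac{1}{4}(w'/w)^2+\tfrac{1}{2}(w'/w)'$), and these are not preserved under two-sided comparison of weights. The fact that rescues the argument is the pointwise inequality $w'/w=\tanh\rho+(n-2)\coth\rho> n-1$ for all $\rho>0$ and $n\geq 3$, which you never state; with it, the first-order Cauchy--Schwarz argument $(n-1)\int f^2w\leq\int f^2w'=\bigl[f^2w\bigr]-2\int ff'w$ goes through. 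Your treatment of the $\rho=0$ end is also not an argument: the boundary term there vanishes simply because $w(0)=0$, not because of the standing assumption that the tube has radial extent at least $1$ or any ``classical bound on the universal cover.'' Worse, if you insist on the substitution $v=w^{1/2}f$, the effective potential $\tfrac{1}{4}(w'/w)^2+\tfrac{1}{2}(w'/w)'$ behaves like $\tfrac{(n-2)(n-4)}{4\rho^2}$ near the core, which is \emph{negative and unbounded below} for $n=3,4$; one must then invoke the sharp Hardy inequality at $\rho=0$ (using that $v\sim\rho^{(n-2)/2}f$ vanishes there) to recover positivity. So the tube case needs either the explicit monotonicity $w'/w>n-1$ fed into the first-order argument, or a reduction to the compact-support inequality via a cutoff as in \cite{dodziuk1986lower}; as written, it does not close.
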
 \par
	This lemma is used in \cite{dodziuk1986lower} and \cite{burger1994lower} to provide a lower bound for the first eigenvalue in the hyperbolic case. Shell estimates are also used in \cite{hamenstadt2018small} in a critical way to give a converse inequality to \Cref{mainth} for hyperbolic $3$-manifold with finite volume. 
	
	\item Construction of the Core $\widetilde{M}$: \label{construCore}\par
	We start with the $r$-neighborhood of the convex core $N(C(M),r)$, where $r$ is a fixed constant such that $r \geq \tanh^{-1} \frac{(n-2)^2}{(n-1)^2}$. The lower bound on $r$ ensures that the inequality of \Cref{pr3.4} holds. Note that $N(C(M),r)$ has finite volume and contains all the tubes and cusps. We then remove all the cusps and Margulis tubes from $N(C(M),r)$. Denote it by $\widehat{M}$. In dimension $\geq 3$, $\widehat{M}$ is connected. In the next step, we glue $\widehat{M}$ with all the shells of tubes, cusps, and ends, obtaining a compact manifold with boundary. Since $M$ is geometrically finite, only finitely many shells are attached. In the last step, we perturb the boundary slightly so that it becomes smooth. Denote it by $\widetilde{M}$ and call it the core of the manifold $M$. It is also connected. The core $\widetilde{M}$ can be viewed as a neighborhood of the intersection of the thick part and the convex core. 
\end{enumerate}

\section{Bounding Small Eigenvalues from Below} 
Recall that our goal is to understand the relationships between the eigenvalues of the manifold $M$ and eigenvalues of the core $\widetilde{M}$. Although the core is compact, topologically it is as complicated as $M$, making explicit computations on the core difficult. On the contrary, its complement has concrete and simpler geometry, each component of which is essentially a warped product. It consists of Margulis tubes, finite-volume cusps, and neighborhoods of geometrically finite ends of infinite volume. Each type of component gives rise to a functional inequality. We start by considering the components of ends. In general, the metric on an end, which is exponentially expanding, is quasi-isometric to $\cosh^2 t ds^2_{\partial C_1(M)} + dt^2$, where $ds^2_{\partial C_1(M)}$ is a metric on the boundary of $1$-neighborhood of the convex core (see \cite{canary1993ends} and \cite{lott1997l2}). The eigenvalues under quasi-isometry have certain stability behavior, which depends on the constant of the quasi-isometry (see comments under [\citenum{colbois2003extremal}, Lemma 2.2]).  

\begin{lemma}\label{lm3.1}
	Let $\overline{C_{r}(M)}$ denote the closure of the $r$-neighborhood of the convex core, $T$ denote a component of $M-\overline{C_{r}(M)}$, which is a neighborhood of an end. Let $f$ be a smooth function with compact support on T. Then 
\end{lemma}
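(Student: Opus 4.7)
The plan is to bound the Rayleigh quotient of any compactly supported $f$ on $T$ from below by exploiting the exponential expansion of the hyperbolic metric in the normal direction to the convex core. Using the smoothness of $\partial \overline{C_r(M)}$ arranged in the preliminaries, I introduce Fermi coordinates $(t, x)$ on $T$ in which $t \in [r, \infty)$ is the distance to the convex core and $x$ ranges over $\partial \overline{C_r(M)}$. In these coordinates the metric takes the form $dt^2 + g_t(x)$ with volume density $J(t, x)$; convexity of the convex core together with the ambient sectional curvature $-1$ gives, via the matrix Riccati equation along each outward normal geodesic, the pointwise estimates $\partial_t \log J \geq (n-1) \tanh t$ and $\partial_t^2 \log J \geq 0$ for $t \geq r$.

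The first step is to discard the non-negative tangential contribution to $\|\nabla f\|^2$, reducing matters to the one-dimensional estimate \[ \int_T \|\nabla f\|^2\, dV \;\geq\; \int_{\partial \overline{C_r(M)}} \int_r^\infty (\partial_t f)^2\, J(t, x)\, dt\, dA(x). \] For each fixed $x$, I apply the substitution $F(t, x) = f(t, x) J(t, x)^{1/2}$ and integrate by parts in $t$. Since $f$ has compact support in $T$, the boundary terms vanish, and expanding the square yields \[ \int_r^\infty (\partial_t f)^2 J\, dt \;=\; \int_r^\infty (\partial_t F)^2\, dt \;+\; \int_r^\infty f^2 \Bigl[\tfrac{1}{2}\partial_t^2 \log J + \tfrac{1}{4}(\partial_t \log J)^2\Bigr] J\, dt. \] Dropping the non-negative $(\partial_t F)^2$ term and invoking the Riccati lower bounds on $\log J$ shows that the bracketed coefficient is bounded below by an explicit non-decreasing function of $\tanh t$. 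The choice $r \geq \tanh^{-1}\!\tfrac{(n-2)^2}{(n-1)^2}$ in the construction of $\widetilde{M}$ is precisely what is needed to push this coefficient above the target constant on $[r, \infty)$, and integrating back over $\partial \overline{C_r(M)}$ then yields the lemma.

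The main obstacle is that the level sets $\{t = \mathrm{const}\}$ are not umbilical: the ambient metric on $T$ is not an honest warped product, and $J$ depends genuinely on $x$ as well as on $t$. Consequently one cannot proceed by a clean separation of variables in eigenfunctions of $\Delta_{\partial \overline{C_r(M)}}$, and every estimate on $\log J$ must be made uniformly in $x$ by tracking the shape operator geodesic-by-geodesic via the Riccati equation. A secondary technical point is to verify that the slight $C^1$ perturbation used to smooth $\partial \overline{C_r(M)}$ preserves the convexity lower bound on the shape operator, so that the Riccati comparison remains available; this can be arranged by taking the smoothing small enough in $C^1$ norm, at the cost of very slightly increasing $r$.
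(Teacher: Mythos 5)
The paper itself proves this lemma by a one-line citation of Lemma~2.3 of \cite{hamenstadt2004small}; you instead attempt a self-contained proof, which is a reasonable ambition, but your argument has a genuine gap at the pointwise estimate $\partial_t^2 \log J \geq 0$. Writing $S(t)$ for the shape operator of the equidistant hypersurfaces, the Riccati equation in curvature $-1$ gives $S' = I - S^2$, hence
\begin{equation*}
\partial_t^2 \log J \;=\; \partial_t\bigl(\operatorname{tr} S\bigr) \;=\; (n-1) - \operatorname{tr}(S^2),
\end{equation*}
which is \emph{negative} wherever some principal curvature of the level set exceeds $1$. This is not a pathological case here: $\partial C(M)$ is in general bent (non-smooth), and the principal curvatures of the $r$-equidistant only satisfy $\tanh(\cdot) \leq \lambda_i \leq \coth(\cdot)$, with the upper bound $\coth r > 1$ actually attained near the creases; already for equidistants of a geodesic ball one has $\partial_t \operatorname{tr} S = (n-1)(1-\coth^2 t) < 0$. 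So you cannot simply discard the $\tfrac12 \partial_t^2\log J$ term as non-negative, and the bracketed coefficient is not bounded below by $\tfrac14(\partial_t\log J)^2$ alone. (A secondary, smaller issue: the threshold $r \geq \tanh^{-1}\tfrac{(n-2)^2}{(n-1)^2}$ plays no role in this lemma, which holds for every $r>0$ with the constant $\tfrac{(n-1)^2\tanh^2 r}{4}$; that threshold is only used later, in Proposition~\ref{pr3.4}, to compare this constant with $\tfrac{(n-2)^2}{4}$.)

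The standard repair --- and essentially what the cited Lemma~2.3 of \cite{hamenstadt2004small} does --- is to avoid second derivatives of $J$ entirely. Let $\rho$ be the distance to the convex core; convexity and curvature $-1$ give, via the same Riccati comparison you invoke, only the first-order bound $\Delta\rho = \partial_t\log J \geq (n-1)\tanh\rho \geq (n-1)\tanh r$ on $T$. Integrating $\operatorname{div}(f^2\nabla\rho)$ over $T$ (no boundary term, by compact support) yields
\begin{equation*}
(n-1)\tanh r \int_T f^2 \;\leq\; \int_T f^2\,\Delta\rho \;=\; -2\int_T f\,\langle\nabla f,\nabla\rho\rangle \;\leq\; 2\Bigl(\int_T f^2\Bigr)^{1/2}\Bigl(\int_T \|\nabla f\|^2\Bigr)^{1/2},
\end{equation*}
and squaring gives the stated inequality. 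If you prefer to keep the ground-state substitution $F = fJ^{1/2}$, you can salvage it, but only by also using the upper Riccati bound $S \leq \coth(\cdot)\,\mathrm{Id}$ to control $\operatorname{tr}(S^2)$ in the combined coefficient $\tfrac12\partial_t^2\log J + \tfrac14(\partial_t\log J)^2$; that requires a genuine additional estimate rather than the sign claim as written.
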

\[
\displaystyle \int_T \| \nabla f \|^2 \geq \frac{(n-1)^2(\tanh r)^2}{4} \int_T f^2 . 
\] 
\begin{proof}
	By [\citenum{hamenstadt2004small}, Lemma 2.3], the infimum of Rayleigh quotients $\mathcal{R}(f)=\int_M \| \nabla f \|^2 /\int_M f^2$ of all such functions on $T$ is bounded below by $(n-1)^2 (\tanh r)^2/4$, which is equivalent to the desired inequality. 
\end{proof}

 Next, we consider the thin parts. For Margulis tubes and finite-volume cusps, we rewrite the concluding inequality of [\citenum{hamenstadt2018small}, Lemma 2.3] as \Cref{lm3.2} for the proof of \Cref{pr3.4}. 

\begin{lemma}\label{lm3.2}
	Suppose $T \subset M_{thin}$ is a Margulis tube or a cusp with boundary $\partial T$, and f is a smooth function on T with $\int_T f^2 \geq \int_{\partial T} f^2$. Then \par
	\begin{center}
		$ \displaystyle \int_T \| {\nabla f} \|^2 \geq   \frac{(n-2)^2}{4} \int_T f^2 $.
	\end{center}
\end{lemma}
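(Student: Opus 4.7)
The plan is to handle the cusp and Margulis tube cases separately. In both, the metric has an (approximate) product structure in a radial coordinate $r$, so $\|\nabla f\|^2\ge(\partial_r f)^2$ pointwise, reducing the problem to a one-dimensional weighted Hardy-type inequality obtained by differentiating $f^2 v(r)$ against the radial factor $v(r)$ of the volume form.

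For a cusp, I would use the coordinates recalled in Section 2, writing $T\cong H/\Gamma\times[0,\infty)$ with $\partial T=\{t=0\}$ and volume form $e^{-(n-1)t}\,dx\wedge dt$. Differentiating $f^2 e^{-(n-1)t}$ in $t$, integrating from $0$ to $\infty$ (the integrand vanishing at infinity since $f\in L^2$), and then integrating over $H/\Gamma$ produces the identity
\[
(n-1)\int_T f^2 = 2\int_T f\,\partial_t f+\int_{\partial T}f^2.
\]
The hypothesis $\int_{\partial T}f^2\le\int_T f^2$ absorbs the boundary term to give $(n-2)\int_T f^2\le 2\int_T f\,\partial_t f$. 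Cauchy--Schwarz followed by division by $\|f\|_{L^2(T)}$ then yields $\int_T (\partial_t f)^2\ge \tfrac{(n-2)^2}{4}\int_T f^2$, which is the desired bound.

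For a Margulis tube, the argument is analogous in Fermi coordinates $(\sigma, s, \rho)$, with radial volume density proportional to $v(\rho)=\sinh^{n-2}(\rho)\cosh(\rho)$. Differentiating $f^2 v(\rho)$ in $\rho$ and integrating from the core $\rho=0$ (where the boundary term vanishes since $v(0)=0$) out to $\rho=R$ produces
\[
\int_{\partial T}f^2 = 2\int_T f\,\partial_\rho f+\int_T f^2\,\frac{v'(\rho)}{v(\rho)}.
\]
The crucial input is the sharp bound
\[
\frac{v'(\rho)}{v(\rho)}=(n-2)\coth\rho+\tanh\rho\ge n-1\quad\text{for all }\rho>0,
\]
which I would establish by checking that the right side is strictly decreasing in $\rho$ (its derivative is $\tfrac{1}{\cosh^2\rho}-\tfrac{n-2}{\sinh^2\rho}<0$ for $n\ge 3$) and tends to $n-1$ as $\rho\to\infty$. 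Armed with this, the same algebraic manipulation as in the cusp case finishes the proof.

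The main obstacle is obtaining the sharp constant in the tube case: the naive pointwise bound $(n-2)\coth\rho+\tanh\rho\ge n-2$ would only yield $\tfrac{(n-3)^2}{4}$, so identifying the actual infimum $n-1$ of the logarithmic derivative of $v$ is essential. A secondary technical point is that $\partial T$ is in general a graph $\{\rho=R(\sigma,s)\}$ rather than a level set of $\rho$, so the divergence theorem needs to be applied with some care regarding the induced boundary measure; but once the bookkeeping is done pointwise in the base variables $(\sigma,s)$ and then integrated, this does not alter the reduction to the one-dimensional inequality above.
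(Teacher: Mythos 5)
Your proof is correct and is essentially the argument the paper invokes: it cites Lemma 2.3 of Hamenst\"adt, whose proof is exactly this integration by parts against the radial volume density (with the Jacobian comparison replaced, in your constant-curvature setting, by the explicit bound $(n-2)\coth\rho+\tanh\rho\ge n-1$ and $e^{-(n-1)t}$ in the cusp), followed by Cauchy--Schwarz. The two technical points you flag --- the graph boundary $\{\rho=R(\sigma,s)\}$ (whose area element dominates $v(R)\,d\sigma\,ds$, so the inequality goes the right way) and the vanishing of the boundary term at infinity in the cusp (handled by exhausting along a sequence $t_j\to\infty$ where the cross-sectional integral tends to zero) --- are indeed the only things to check, and both work out.
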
 \par
The proof of the above lemma holds more generally in variable negative curvatures; it uses Jacobian comparison under variation of curvatures and integration by parts. The coefficient of \Cref{lm3.2} for cusps of finite volume can be improved. The following inequality is a special case of [\citenum{dodziuk1986lower}, Lemma 1]: 
\begin{lemma}\label{lm3.3}
	Let $T$ be a cusp corresponding to a parabolic subgroup $\Gamma$ of maximal rank $n-1$. Suppose $f$ is a smooth function with compact support in the interior of $T$. Then \par
	\begin{center}
		$ \displaystyle \int_T \| {\nabla f} \|^2 \geq   \frac{(n-1)^2}{4} \int_T f^2 $.
	\end{center}
\end{lemma}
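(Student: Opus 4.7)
The plan is to reduce the $n$-dimensional inequality on the cusp to a one-dimensional Hardy-type inequality in the radial parameter $t$, exploiting the explicit warped product structure described in the definition of a maximal rank cusp. Since the metric on $T$ has orthogonal splitting with volume form $\omega = e^{-(n-1)t} \, dx \wedge dt$ and the level sets $\{t = \text{const}\}$ are horospherical tori with metric scaling by $e^{-t}$, the $t$-direction is orthogonal to the $x$-directions and $\|\nabla f\|^2 \geq (\partial_t f)^2$ pointwise. Fubini reduces the inequality to proving, for each fixed $x \in H/\Gamma$ and $\alpha = n-1$, that
\begin{equation*}
\int_0^{\infty} (\partial_t f)^2 \, e^{-\alpha t} \, dt \;\geq\; \frac{\alpha^2}{4} \int_0^{\infty} f^2 \, e^{-\alpha t} \, dt,
\end{equation*}
after which integrating over $H/\Gamma$ against $dx$ yields the desired global inequality.

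For the one-dimensional step, the natural trick is the substitution $u(t) = f(t) e^{-\alpha t/2}$, under which $u'(t) = f'(t) e^{-\alpha t/2} - (\alpha/2) f(t) e^{-\alpha t/2}$. Expanding $(u')^2$ and rearranging gives
\begin{equation*}
(f')^2 e^{-\alpha t} = (u')^2 + \alpha f f' e^{-\alpha t} - \tfrac{\alpha^2}{4} f^2 e^{-\alpha t}.
\end{equation*}
Integrating the middle cross term by parts, and using that $f$ has compact support in the interior of $T$ so that the boundary contributions at $t=0$ and $t=\infty$ both vanish, we find $\int_0^\infty \alpha f f' e^{-\alpha t}\,dt = (\alpha^2/2) \int_0^\infty f^2 e^{-\alpha t}\,dt$. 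Substituting back yields
\begin{equation*}
\int_0^\infty (f')^2 e^{-\alpha t}\,dt \;=\; \int_0^\infty (u')^2\,dt + \tfrac{\alpha^2}{4} \int_0^\infty f^2 e^{-\alpha t}\,dt \;\geq\; \tfrac{\alpha^2}{4} \int_0^\infty f^2 e^{-\alpha t}\,dt,
\end{equation*}
which is exactly the one-dimensional inequality with the sharp constant $(n-1)^2/4$.

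No step poses a serious obstacle here: the maximal-rank assumption is what guarantees that the horospherical cross section is compact and that $t \in [0, \infty)$, so the integration by parts is clean and no tangential boundary term on $\partial T \times \{t>0\}$ is created. If I wanted to be careful, the only subtlety is justifying the pointwise bound $\|\nabla f\|^2 \geq (\partial_t f)^2$, which follows immediately from the orthogonal decomposition of the hyperbolic metric on the cusp, $g = e^{-2t} g_{H/\Gamma} + dt^2$, where the $dt^2$-block is an orthogonal direct summand. The constant $(n-1)^2/4$ is optimal in this setting because it is saturated formally by $f \sim e^{\alpha t/2}$, which is precisely the threshold at which $f$ fails to lie in $L^2(T, \omega)$, confirming the sharpness of the bound and corroborating the known spectral bottom of the hyperbolic cusp.
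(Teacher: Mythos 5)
Your proposal is correct and follows essentially the same route the paper indicates: reduce to a one-dimensional inequality in the $t$-direction (which is exactly McKean's formula (3), here derived explicitly via the substitution $u = f e^{-(n-1)t/2}$ and integration by parts with no boundary terms thanks to compact support) and then integrate over the base $H/\Gamma$. The only difference is that you write out the one-dimensional Hardy-type step that the paper delegates to \cite{mckean1970upper} and \cite{dodziuk1986lower}.
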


The proof of \Cref{lm3.3} is a consequence of formula $(3)$ on page $3$ from \cite{mckean1970upper}. We only need to show similar inequality is true in the $t$ direction and then integrate over the base $H/ \Gamma$. The following proposition generalizes Lemma 2.4 of \cite{hamenstadt2018small} with essentially the same technique. Recall that the core $\widetilde{M} = \widetilde{M}(r)$ depends on $r$. The restriction on values of $r$ ensures that the estimates on the ends of infinite volume fit those of the thin part from \cite{hamenstadt2018small}.

\begin{proposition}\label{pr3.4}
	Let $f: M \rightarrow \mathbb{R}$ be a smooth, square-integrable function with Raleigh quotient $\mathcal{R}(f)< (n-2)^2 / 12$. Then \par
	\begin{center}
		$ \displaystyle \int_{\widetilde{M}} f^2 \geq  \frac{1}{3} \int_M f^2 $  ,
	\end{center}
	where $r\geq \tanh ^{-1} \frac{(n-2)^2}{(n-1)^2}$.
\end{proposition}

\begin{proof}
	Recall that a neighborhood of an end of infinite volume is a component of $M-\overline{C_{r}(M)}$. Let $M^*$ denote the disjoint union of all components of Margulis tubes, cusps, and neighborhoods of ends of infinite volume: $M^*=\cup _{i=1}^k T_i$. The union is finite for a geometrically finite manifold. For simplicity of notation, for each $T_i$ which belongs to the thin part, denote by $r_i$ the radial distance function to the boundary hypersurface, i.e., $r_i (x)=$ length of a radial arc connecting $x\in T$ to $\partial T$. For each $T_i$ which belongs to the ends of infinite volume, let $r_i (x)$ denote the distance to $\partial \widetilde{M}$, the boundary of the core. In each $T_i$, we consider the shell consisting of points whose distance to $\partial T_i$ is less than or equal to $1$. Denote by $A$ the finite disjoint union of all such shells. Recall that the thick core $\widetilde{M}$ is defined to be $(M-int(M^*)) \cup A$. Moreover, up to reordering, we may assume that there exists $p \leq k$ such that for $i\leq p$, there exist $s_i \leq 1$ such that  \par
	
	\begin{center}
		$\displaystyle \int_{\{r_i=s_i\} \cap T_i} f^2 \leq \int_{\{r_i\geq s_i \}\cap T_i} f^2$
	\end{center} 
	and that for $i>p$, such $s_i$'s do not exist. The volume element on the hypersurfaces ${r_i = s_i}$ is as in Lemma 2.3 of \cite{hamenstadt2018small} and Lemma 2.3 of \cite{hamenstadt2004small}. If $\int_{M^*-A}f^2 \leq \frac{2}{3}\int_M f^2$ we are done. Thus we assume $\sum_{i=1}^k \int_{T_i-A}f^2 = \int_{M^*}f^2 > \frac{2}{3}\int_M f^2$. There are two cases. In the first case, $\sum_{i=1}^p \int_{T_i-A}f^2 \geq \frac{1}{3}\int_M f^2 $. Suppose among the $p$ components, $p_1$ components are tubes and cusps, and the other $p-p_1$ components are neighborhoods of ends of infinite volume. Combining \Cref{lm3.1} and \Cref{lm3.2} shows that
	\begin{align*}
		\int_M \|{\nabla f}\|^2  
		& \geq \sum_{i=1}^{p} \int_{T_i \cap \{ r_i \geq s_i \}} \| {\nabla f} \|^2 \\
		& \geq \displaystyle \frac{(n-2)^2}{4} \sum_{i=1}^{p_1}  \int_{\{r_i\geq s_i \}\cap T_i} f^2 +\frac{(n-1)^2\tanh{r}}{4} \sum_{i=p_1}^{p}  \int_{\{r_i\geq s_i \}\cap T_i} f^2 \\
		& \geq \frac{(n-2)^2}{12}\int_M f^2,
	\end{align*}
	where we use the assumption on $r$ in the last step. This contradicts the assumption on the Rayleigh quotient of $f$. 
	In the second case, $\sum_{i=1}^p \int_{T_i-A} f^2 < \frac{1}{3}\int_M f^2$ implies that 
	\begin{center}
		$\displaystyle \sum_{i=p+1}^k \int_{T_i -A} f^2 \geq \frac{1}{3}\int_M f^2 $.
	\end{center}
	For each $i>p$, if we integrate the defining equation  
	\begin{center}
		$\displaystyle \int_{\{r_i=s_i\} \cap T_i} f^2 \geq \int_{ \{r_i\geq s_i \}\cap T_i} f^2$
	\end{center}
	over the shell $T_i \cap A = \{0 \leq r_i \leq 1 \}  $, we obtain
	\begin{align*}
		 \int_{A \cap T_i} f^2 
		 &=\int_0^1 ds \int_{\{r_i=s_i\} \cap T_i} f^2  \geq \int_0^1 ds \int_{ \{r_i\geq s_i \}\cap T_i} f^2 \\
		 &\geq \int_0^1 ds \int_{ \{r_i\geq 1 \}\cap T_i} f^2 = \int_{T_i -A} f^2.
	\end{align*}
	Summing over $i \geq p+1$ and using inequality, we obtain 
	\begin{center}
		$ \displaystyle \int_{\cup_{i=p+1}^k T_i \cap A} f^2 \geq \sum_{i=p+1}^k \int_{T_i-A }f^2 \geq \frac{1}{3}\int_M f^2 $.
	\end{center} 
	As $A \subset \widetilde{M}$, this contradicts the assumption on $f$. The lemma follows. 
\end{proof}

Now we are ready to complete the proof of our main theorem. Our argument is essentially the same as \cite{hamenstadt2018small} for the finite-volume case. We clarify the argument from \cite{hamenstadt2018small}, which demonstrates the nondegenercy of the operator that restricts a function on $M$ to its core $\wm$. 

\begin{theorem}\label{th3.5}
	Let M be an oriented geometrically finite hyperbolic manifold $M$ of dimension $n \geq 3$ of infinite volume. Then for all $k \geq 0$, we have 
	\begin{center}
		$\Large \lambda_k(M) \geq $ min \{$\frac{1}{3} \lambda_k(\widetilde{M}) ,\frac{(n-2)^2}{12}$\}.
	\end{center} \par
\end{theorem}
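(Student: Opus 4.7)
The plan is to argue by Min-max applied twice, transferring test functions from $M$ to $\widetilde{M}$ via restriction, with \cref{pr3.4} controlling the loss of $L^2$-mass and the linear independence. If $\lambda_k(M) \geq (n-2)^2/12$ there is nothing to prove, so I would assume $\lambda_k(M) < (n-2)^2/12$ and try to establish $\lambda_k(M) \geq \tfrac{1}{3}\lambda_k(\widetilde{M})$. From \cite{Hislop1994} the essential spectrum of $-\Delta$ on $M$ lies at or above $((n-1)/2)^2 > (n-2)^2/12$, so $\lambda_0(M),\dots,\lambda_k(M)$ are genuine discrete eigenvalues; let $\psi_0,\ldots,\psi_k$ be corresponding $L^2$-orthonormal eigenfunctions and let $V \subset L^2(M)$ be their span. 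Every $\psi \in V$ satisfies $\mathcal{R}(\psi)\leq \lambda_k(M) < (n-2)^2/12$, so \cref{pr3.4} applies uniformly on $V$.

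Next I would restrict: let $\rho: V \to L^2(\widetilde{M})$ be the restriction map. The key point is that $\rho$ is injective on $V$: indeed if $\psi \in V$ and $\psi|_{\widetilde{M}}=0$, then \cref{pr3.4} forces $\int_M \psi^2 \leq 3\int_{\widetilde{M}}\psi^2 = 0$, so $\psi \equiv 0$. Hence $W := \rho(V)$ is a $(k+1)$-dimensional subspace of $L^2(\widetilde{M})$ consisting of $H^1$ functions (the eigenfunctions are smooth and their restrictions together with their gradients lie in $L^2(\widetilde{M})$ because $\widetilde{M}\subset M$ has finite volume and compact closure), so $W$ is contained in the form domain of the Neumann Laplacian on $\widetilde{M}$.

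For any $\psi \in V \setminus \{0\}$ with restriction $\widetilde{\psi} := \rho(\psi)$, a direct computation combined with \cref{pr3.4} gives
\begin{equation*}
\frac{\int_{\widetilde{M}}\|\nabla \widetilde{\psi}\|^2}{\int_{\widetilde{M}}\widetilde{\psi}^2}
\;\leq\; \frac{\int_M \|\nabla \psi\|^2}{\tfrac{1}{3}\int_M \psi^2}
\;=\; 3\,\mathcal{R}(\psi) \;\leq\; 3\lambda_k(M).
\end{equation*}
Feeding $W$ into the Min-max characterization (Theorem of Section 2) for the Neumann Laplacian on $\widetilde{M}$ produces the bound $\lambda_k(\widetilde{M}) \leq 3\lambda_k(M)$, which is the desired inequality. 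The result is stated cleanly by combining the two cases into the stated minimum.

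I expect the main obstacle to be the injectivity/linear independence step, which is what the acknowledgment to Laugesen in the introduction seems to concern: one must be certain that restriction cannot collapse the span, and the quantitative mass bound from \cref{pr3.4} is exactly what makes this work, since it rules out a nontrivial combination vanishing on $\widetilde{M}$. A secondary subtlety is verifying that $W$ is an admissible test subspace for the Neumann problem on $\widetilde{M}$ (that is, lying in $H^1(\widetilde{M})$ with no boundary condition required), which follows from smoothness of the $\psi_i$ and compactness of $\overline{\widetilde{M}}$; and one should also note that the Min-max theorem on $\widetilde{M}$ applies unconditionally because $\widetilde{M}$ is a compact manifold with smooth boundary and hence has purely discrete Neumann spectrum.
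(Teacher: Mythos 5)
Your proposal is correct and follows essentially the same route as the paper: restrict the span of the first eigenfunctions to $\widetilde{M}$, use Proposition~\ref{pr3.4} both to prove injectivity of the restriction and to bound the Rayleigh quotient of the restricted functions by $3\mathcal{R}(\psi)$, then apply the min-max characterization on $\widetilde{M}$. Your injectivity step is in fact cleaner than the paper's (which argues separately about linear combinations of eigenfunctions with distinct eigenvalues before invoking the proposition), since observing that every $\psi$ in the span has $\mathcal{R}(\psi)\leq\lambda_k(M)<(n-2)^2/12$ lets Proposition~\ref{pr3.4} kill any $\psi$ vanishing on $\widetilde{M}$ directly.
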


\begin{proof}
	Let $M$ be an oriented geometrically finite hyperbolic manifold $M$ of dimension $n \geq 3$ of infinite volume. By Theorem 2.12 of \cite{Hislop1994}, in the interval $[0, \frac{(n-2)^2}{12})$, $M$ has at most finitely many discrete eigenvalues, each of which has finite multiplicity. The idea of the proof is that the projection of the first $k$ eigenfunctions to some function space $\mathcal{H}(\widetilde{M})$ is nondegenerate and spans a $k$-dimensional subspace for $\mathcal{H}(\widetilde{M})$. Then applying Min-max theorem and \Cref{pr3.4} will give the desired result. Let $\mathcal{H}(M)$ (respectively $ \mathcal{H}(\widetilde{M})$) be the Sobolev space of square-integrable functions with square-integrable weak derivatives on $M$ ( $\widetilde{M}$ has smooth boundary  by construction). The class $\mathcal{H}(\widetilde{M})$ contains all functions that are smooth in the interior of, and, up to the boundary of $\widetilde{M}$(see page 14 to 17 in \cite{1984363}). Let $k>0$ be such that $\lambda_k(M)<(n-2)^2/12$. We construct a $k$-dimensional linear subspace of the Hilbert space $\mathcal{H}(M)$, which correspond to the direct sum of the first $k$ eigenspaces. If $\lambda_i$ is simple, denote by $E_i$ the eigenspace corresponding to $\lambda_i$. If $\lambda_i>\lambda_{i-1}$ and $\lambda_i$ has multiplicity $m_i$, let $E_i, \cdots, E_{i+m_i-1}$ be linearly independent $1$-dimensional eigenspace corresponding to $\lambda_i$. Note that our choice of the eigenspaces will not affect later arguments. Let $E$ denote the direct sum of first $k$ eigenspaces: $E\coloneqq \bigoplus_{j=1}^k E_j$. \par 
	In order to relate eigenvalues of $\widetilde{M}$ to those of $M$, we consider the projection or restriction map
	\begin{equation}
		\pi : \mathcal{H}(M) \rightarrow \mathcal{H}(\widetilde{M}).	
	\end{equation}
	Since smooth functions are dense in  $\mathcal{H}(M)$ and $\mathcal{H}(\widetilde{M})$, $\pi$ is a $1$-Lipschitz linear map. Denote $\pi (E) $ by $W$. We  will use \Cref{pr3.4} to show that $\pi$ is nondegenerate, i.e., the dimension of $W$ is $k$, which is the same as that of $E$. Our first step is to show that if there is a nontrivial linear combination of eigenfunctions whose restriction to the core is zero, then they must correspond to the same eigenvalue. We proceed by induction. We start with the case of two functions. To simplify notations, assume there is a nontrivial linear combination of $f_i$ and $f_j$ such that $c_1 f_i +c_2f_j=0$, where $f_i$ corresponds to $\lambda_i$ and has Rayleigh quotient $<(n-2)^2/12 $. By \Cref{pr3.4}, the constant $c_1$ and $c_2$ are nonzero. Apply the Laplacian operator to the equation, we get $c_1 \lambda_i f_i +c_2 \lambda_j f_j=0$. If $\lambda_i \neq \lambda_j$ we have $f_i=f_j=0$ on $\widetilde{M}$, contradicting Proposition \ref{pr3.4}. Now suppose that the statement is true for $k-1$ eigenfunctions and
	\[
	c_1f_{i_1} + \cdots +c_kf_{i_k}=0. 
	\]
	Without loss of generality, all coefficients $c_i$ are nonzero since otherwise, it follows from the induction hypothesis already. Applying the Laplacian operator to the above equation $k-1$ times, we get a linear system of equations whose coefficient matrix is 
	\[
	\begin{bmatrix}
		c_1 &  \dots &c_{k} \\
		c_1 \lambda_{i_1} & \dots & c_k \lambda_{i_k} \\
		\vdots & \ddots & \\
		c_1 \lambda_{i_1}^{k-1} & \dots & c_k \lambda_{i_k}^{k-1}
	\end{bmatrix}.
	\]
	The determinant of this matrix is $c_1\cdots c_k \Pi_{1\leq p < q \leq k} (\lambda_{i_q}-\lambda_{i_p})$ by the properties of Vandermonde matrix. If all eigenvalues are distinct, then the nonzero determinant forces all $f_{i_p}$ to vanish in the core, contradicting \Cref{pr3.4}. This finishes the induction step. \par 
	If there is a function $f=c_1 f_{i_1} + ... +c_k f_{i_k} \in E$ such that $\pi(f)=0$, it then follows that all $f_i$ must correspond to the same eigenvalue $\lambda_i < (n-2)^2/12$. Thus $f$ is an eigenfunction corresponding to $\lambda_i$ whose restriction to $\wm$ is zero. This is again a contradiction. Therefore we have proven that $\pi$ is nondegenerate, and hence $dim(W)=dim(E)=k$. Let $f$ belong to $E$, the direct sum of the first $k$ eigenspaces. From properties of the Rayleigh quotient, we have $\mathcal{R}(f) \leq \lambda_k (M)$, which is equal to the largest Rayleigh quotient of eigenfunctions in $E$. Note that both $f$ and $\pi (f)$ are smooth. Again, using 
	\begin{center}
		$\displaystyle \int_{\widetilde{M}} f^2 \geq  \displaystyle \frac{1}{3}$,\par
	\end{center}
	we have \par
	\begin{center}
		\large{$\lambda_k(M)  = \underset{f_i \in E}{\sup} $  $\mathcal{R}(f_i) =\underset{f_i \in E}{\sup}  \displaystyle \int_{M} \| {\nabla f_i} \| ^2 \geq \underset{f_i \in E}{\sup} \int_{\widetilde{M}} \| {\nabla f_i} \| ^2$} \par
		
		\large{$= \underset{f_i \in E}{\sup}$ $\displaystyle \mathcal{R}(f_i|_{\widetilde{M}}) \int_{\widetilde{M}} f^2 \geq \underset{f_i \in E}{\sup}$ $\displaystyle \frac{1}{3} \mathcal{R}(f_i|_{\widetilde{M}}) \geq \frac{1}{3} \lambda_k (\widetilde{M})$ }. \par
	\end{center}
	The last inequality follows from two facts. First, note that the restriction $\pi(f_i)=f_i|_{\widetilde{M}}$ of $f_i$ a priori satisfies no boundary condition. Thus the min-max characterization using $f_i|_{\widetilde{M}}$ gives rise to Neumann eigenvalues of $\widetilde{M}$. The second fact is that $W$ is only some $k$-dimensional subspace, not necessarily the one spanned by the first $k$ eigenfunctions of $\widetilde{M}$.
\end{proof}

\begin{corollary}
	There exists a constant $C(n)$ that only depends on the dimension such that 
	\begin{equation}
		\lambda_0(M) \geq \frac{C(n)}{\vol(\widetilde{M})^2}. 
	\end{equation}
\end{corollary}
\begin{proof}
	The argument is similar to the proof of [\citenum{hamenstadt2018small}, Corollary 2.6], where the thick part $M_{thick}$ is compact and uniformly quasi-isometric to a finite connected graph whose first nontrivial eigenvalue is uniformly comparable to $\lambda_1(M_{thick})$. Note that for geometrically finite $M$ of infinite volume, the thick part $M_{thick}$ is not compact since it contains neighborhoods of ends. In constructing the core $\widetilde{M}$, we intersect the thick part with a neighborhood of the convex core,  which results in a compact set. This intersection has a lower bound on the injectivity radius, which depends on the Margulis constant $\epsilon(n)$. It remains to show that a lower bound on the injectivity radius, which depends only on the dimension, remains after finitely many shells are glued to the intersection. For a Margulis tube $T$, Buser, Colbois, and Dodziuk [\citenum{buser1993tubes}, Lemma 2.4] show that the radius $R_T$ of a tube $T$ grow towards infinity if the length $\ell$ of the core geodesic approaches $0$ (note that their result generalizes a theorem by Meyerhoff on [\citenum{mrLowerVolHyp}, P1042]). The proof of [\citenum{buser1993tubes}, Corollary 2.24] can be adapted to argue that the injectivity radius of the shell is uniformly bounded below by a constant depending on the dimension. For the cusp case, since the injectivity radius of its boundary is approximately $\epsilon(n)$, using the metric of the cusp (Section 2, \ref{cusp}), one can show the shells also have a uniform lower bound on the injectivity radius. Thus the core $\widetilde{M}$ is uniformly quasi-isometric to a finite connected graph $G$, whose number of vertices is approximately $\vol(\widetilde{M})$. 
\end{proof}

\section{Future Directions}
In dimension three, for finite-volume oriented hyperbolic $3$-manifolds $M$, Hamenst\"{a}dt \cite{hamenstadt2018small} proves
\begin{center}
	$\lambda_k(M) \leq c \log(vol(M_{thin})+2)\lambda_k(M_{thick}) $
\end{center}
for all $k \geq 1$ such that $\lambda_k(M_{thick}) < 1/96$. Her technique builds on special properties of $3$-dimensional Margulis tubes and does not generalize naturally to higher dimensions. Canary [\citenum{canary1992}, Theorem A] obtains an upper bound on the infimum of the spectrum of geometrically finite manifolds of infinite volume in dimension $3$: 
\[
\lambda_{0}(M) \leq c\frac{|\chi(\partial C(M))|}{\vol (C(M))},
\]
where $\chi(\partial C(M))$ is the Euler characteristic of the boundary surface of the convex core. 
It is natural to ask for generalization of results of either Hamenst\"{a}dt or Canary to higher dimensions or general eigenvalues. Moreover, many questions remain uninvestigated. What is the pattern of the distribution of the eigenvalues of noncompact negatively curved manifolds? Do they distribute relatively uniformly, or can they cluster around one point, like zero or the bottom of the essential spectrum? The interaction between eigenvalues and the Gromov-Hausdorff convergence of manifolds is also interesting. By [\citenum{canary1992}, Theorem A], a geometrically infinite, doubly degenerate hyperbolic $3$-manifold $M$ has the bottom $\lambda_0$ of the spectrum equal to $0$. Using Thurston's double limit theorem [\citenum{wtHypFiberII}, Theorem 4.1] (see also [\citenum{canary1992}, Remark (2) on P365]), one can construct a sequence of geometrically finite, quasi-Fuchsian hyperbolic $3$-manifolds $M_j$ such that the volume of the convex core $\vol(C(M_j))$ tends to infinity and $M_j \rightarrow M$ geometrically (geometric convergence of hyperbolic manifolds is equivalent to the Gromov-Hausdorff convergence). Thus $\lambda_0(M_j)$ tends to $0$ which is $\lambda_0(M)$. It implies that $\lambda_0$ might satisfy certain continuity with respect to geometric convergence, which is confirmed in [\citenum{ctHausdorffDimLimit}, Theorem 4.1]. When a sequence of closed hyperbolic $3$-manifolds $M_j$ converge geometrically to a noncompact finite-volume manifold $M$, I. Chavel and J. Dodziuk \cite{cdSpectrumDege} have investigated how fast the discrete eigenvalues of $M_j$ cluster around the continuous spectrum of $M$. A natural question is whether the higher eigenvalues of infinite-volume $M_j$ satisfy similar continuity with respect to convergence. Recently, B. Liu and F. Pallete \cite{lbpfUniformSpectralGap} have considered similar problems for the spectral gap of geometrically finite manifolds. \par  
Moreover, eigenvalues for differential forms show up unexpectedly as a bridge to connect Floer homology on 3-manifolds and hyperbolic geometry. See \cite{lin2017monopole} and the reference therein. While monopole Floer homology is difficult to compute directly, using eigenvalues of forms to mediate between geometry and Floer homology seems practical. It will be interesting to develop a stronger connection between such eigenvalues and the underlying hyperbolic geometry.

\section*{Acknowledgment}
The author would like to express deep gratitude to his advisor Nathan Dunfield for continuing support, helpful insights and comments through the years of his Ph.D. study. The author thanks Pierre Albin, John D'Angelo, Anil Hirani, and Hadrian Quan for helpful discussions and emails. Special thanks to Richard Laugesen for clarification in the proof of Theorem 3.5. We are also very grateful to the anonymous reviewers for their careful reading of our manuscript and many insightful comments which improve the paper. This work is partially supported by NSF grant DMS-1811156.

\bibliographystyle{plain}
\bibliography{References}

\begin{thebibliography}{10}

\bibitem{burger1994lower}
Marc Burger and Richard~D Canary.
\newblock A lower bound on $\lambda_0$ for geometrically finite hyperbolic
  n-manifolds.
\newblock {\em J. reine angew. Math}, 454:37--57, 1994.

\bibitem{bpNoteIsoperimetric}
Peter Buser.
\newblock A note on the isoperimetric constant.
\newblock {\em Ann. Sci. \'{E}cole Norm. Sup. (4)}, 15(2):213--230, 1982.

\bibitem{buser1993tubes}
Peter Buser, Bruno Colbois, and Jozef Dodziuk.
\newblock Tubes and eigenvalues for negatively curved manifolds.
\newblock {\em The Journal of Geometric Analysis}, 3(1):1, 1993.

\bibitem{ctHausdorffDimLimit}
R.~D. Canary and E.~C. Taylor.
\newblock Hausdorff dimension and limits of {K}leinian groups.
\newblock {\em Geom. Funct. Anal.}, 9(2):283--297, 1999.

\bibitem{canary1992}
Richard~D. Canary.
\newblock On the laplacian and the geometry of hyperbolic 3-manifolds.
\newblock {\em J. Differential Geom.}, 36(2):349--367, 1992.

\bibitem{canary1993ends}
Richard~D Canary.
\newblock Ends of hyperbolic 3-manifolds.
\newblock {\em Journal of the American Mathematical Society}, 6(1):1--35, 1993.

\bibitem{ctKleinianGroupsLimit}
Richard~D. Canary and Edward Taylor.
\newblock {Kleinian groups with small limit sets}.
\newblock {\em Duke Mathematical Journal}, 73(2):371 -- 381, 1994.

\bibitem{cdSpectrumDege}
I.~Chavel and J.~Dodziuk.
\newblock The spectrum of degenerating hyperbolic \{$3$\}-manifolds.
\newblock {\em J. Differential Geom.}, 39(1):123--137, 1994.

\bibitem{1984363}
Isaac Chavel.
\newblock In {\em Eigenvalues in Riemannian Geometry}, volume 115 of {\em Pure
  and Applied Mathematics}. Elsevier, 1984.

\bibitem{cjLowerBoundSmallest}
Jeff Cheeger.
\newblock A lower bound for the smallest eigenvalue of the {L}aplacian.
\newblock In {\em Problems in analysis ({P}apers dedicated to {S}alomon
  {B}ochner, 1969)}, pages 195--199. 1970.

\bibitem{colbois2003extremal}
Bruno Colbois and Ahmad El~Soufi.
\newblock Extremal eigenvalues of the laplacian in a conformal class of
  metrics: Theconformal spectrum'.
\newblock {\em Annals of Global Analysis and Geometry}, 24(4):337--349, 2003.

\bibitem{dod1987}
Jozef Dodziuk.
\newblock A lower bound for the first eigenvalue of a finite-volume negatively
  curved manifold.
\newblock {\em Bol. Soc. Bras. Mat.}, 18:23--34, 1987.

\bibitem{dodziuk1986lower}
Jozef Dodziuk, Burton Randol, et~al.
\newblock Lower bounds for $ \lambda_1$ on a finite-volume hyperbolic manifold.
\newblock {\em Journal of differential geometry}, 24(1):133--139, 1986.

\bibitem{hamenstadt2004small}
Ursula Hamenst{\"a}dt.
\newblock Small eigenvalues of geometrically finite manifolds.
\newblock {\em The Journal of Geometric Analysis}, 14(2):281--290, 2004.

\bibitem{hamenstadt2018small}
Ursula Hamenst\"{a}dt.
\newblock Small eigenvalues and thick-thin decomposition in negative curvature.
\newblock {\em Ann. Inst. Fourier (Grenoble)}, 69(7):3065--3093, 2019.

\bibitem{Hislop1994}
Peter~D. Hislop.
\newblock The geometry and spectra of hyperbolic manifolds.
\newblock {\em Proceedings of the Indian Academy of Sciences - Mathematical
  Sciences}, 104(4):715--776, Nov 1994.

\bibitem{LAX1982280}
Peter~D Lax and Ralph~S Phillips.
\newblock The asymptotic distribution of lattice points in euclidean and
  non-euclidean spaces.
\newblock {\em Journal of Functional Analysis}, 46(3):280--350, 1982.

\bibitem{lpI}
Peter~D. Lax and Ralph~S. Phillips.
\newblock Translation representations for automorphic solutions of the wave
  equation in non-euclidean spaces. i.
\newblock {\em Communications on Pure and Applied Mathematics}, 37(3):303--328,
  1984.

\bibitem{lpII}
Peter~D. Lax and Ralph~S. Phillips.
\newblock Translation representations for automorphic solutions of the wave
  equation in non-euclidean spaces. ii.
\newblock {\em Communications on Pure and Applied Mathematics}, 37(6):779--813,
  1984.

\bibitem{lpIII}
Peter~D. Lax and Ralph~S. Phillips.
\newblock Translation representations for automorphic solutions of the wave
  equation in non-euclidean spaces. iii.
\newblock {\em Communications on Pure and Applied Mathematics}, 38(2):179--207,
  1985.

\bibitem{lin2017monopole}
Francesco Lin.
\newblock Monopole {F}loer homology and the spectral geometry of
  three-manifolds.
\newblock {\em Comm. Anal. Geom.}, 28(5):1211--1219, 2020.

\bibitem{lbpfUniformSpectralGap}
Beibei Liu and Franco~Vargas Pallete.
\newblock Uniform spectral gap and orthogeodesic counting for strong
  convergence of kleinian groups, 2022.

\bibitem{lott1997l2}
John Lott.
\newblock L2-cohomology of geometrically infinite hyperbolic 3-manifolds.
\newblock {\em Geometric \& Functional Analysis GAFA}, 7(1):81--119, 1997.

\bibitem{mckean1970upper}
Henry~P McKean et~al.
\newblock An upper bound to the spectrum of $delta$ on a manifold of negative
  curvature.
\newblock {\em Journal of Differential Geometry}, 4(3):359--366, 1970.

\bibitem{mrLowerVolHyp}
Robert Meyerhoff.
\newblock A lower bound for the volume of hyperbolic 3-manifolds.
\newblock {\em Canadian Journal of Mathematics}, 39(5):1038–1056, 1987.

\bibitem{rjFoundationHypMan}
John~G. Ratcliffe.
\newblock {\em Foundations of hyperbolic manifolds}, volume 149 of {\em
  Graduate Texts in Mathematics}.
\newblock Springer, Cham, [2019] \copyright 2019.
\newblock Third edition [of 1299730].

\bibitem{schoen1982}
Richard Schoen.
\newblock A lower bound for the first eigenvalue of a negatively curved
  manifold.
\newblock {\em J. Differential Geom.}, 17(2):233--238, 1982.

\bibitem{teschlmathematical}
Gerald Teschl.
\newblock {\em Mathematical methods in quantum mechanics}, volume 157 of {\em
  Graduate Studies in Mathematics}.
\newblock American Mathematical Society, Providence, RI, second edition, 2014.
\newblock With applications to Schr\"{o}dinger operators.

\bibitem{wtHypFiberII}
William~P. Thurston.
\newblock Hyperbolic structures on 3-manifolds, ii: Surface groups and
  3-manifolds which fiber over the circle, 1998.

\bibitem{doi:10.1112/jlms/jds082}
Nina White.
\newblock Spectral bounds on closed hyperbolic 3-manifolds.
\newblock {\em Journal of the London Mathematical Society}, 87(3):837--852.

\end{thebibliography}

Xiaolong Hans Han\footnote{Current Address: Yau Mathematical Sciences Center, Tsinghua University, Beijing, Beijing 100084, China. xlhan@mail.tsinghua.edu.cn}\\
Email: \\
\href{mailto:xhan25@illinois.edu}{xhan25@illinois.edu} \\
Address: \\
273 Altgeld Hall
1409 W. Green Street 
Urbana, IL 61801

\end{document}